\setlist[enumerate,1]{label=(\alph*)}
\setlist[enumerate,2]{label=(\roman*)}
\theoremstyle{definition}
\theoremstyle{remark}
\theoremstyle{plain}
\newtheorem{theorem}{Theorem}[section]
\newtheorem*{theorem*}{Theorem}
\newtheorem{lemma}[theorem]{Lemma}
\newtheorem{corollary}[theorem]{Corollary}
\theoremstyle{remark}
\newtheorem{remark}[theorem]{Remark}
\newtheorem*{remark*}{Remark}
\theoremstyle{definition}
\newtheorem{definition}[theorem]{Definition}
\newtheorem{miniremark}[theorem]{}
\newcommand{\End}[1]{ \mathrm{End}({#1}) }
\newcommand{\ccspace}[1]{\mathscr{K}(#1)}
\newcommand{\ograss}[2]{\mathbf{G}_0(#1,#2)}
\newcommand{\PC}[1]{\mathbf{P}_{#1}}
\newcommand{\IPC}[1]{\mathscr{P}_{#1}}
\newcommand{\Curr}[1]{\mathscr{D}_{#1}}
\newcommand{\Forms}[1]{\mathscr{D}^{#1}}
\newcommand{\Mass}{\mathbf{M}}
\newcommand{\sphere}[1]{\mathbb{S}^{#1}}
    \renewcommand{\textint}[2]{{\textstyle\int_{#1}^{#2}}}
    \newcommand{\textint}[2]{{\textstyle\int_{#1}^{#2}}}
    \renewcommand{\textsum}[2]{{\textstyle\sum_{#1}^{#2}}}
    \newcommand{\textsum}[2]{{\textstyle\sum_{#1}^{#2}}}
\newcommand{\natp}{\mathscr{P}}
\newcommand{\integers}{\mathbf{Z}}
\newcommand{\R}{\mathbf{R}}
\newcommand{\Q}{\mathbf{Q}}
\newcommand{\CF}[1]{\boldsymbol{\chi}_{#1}}
\newcommand{\HM}{\mathscr{H}}
\newcommand{\restrict}{ \mathop{ \rule[1pt]{.5pt}{6pt} \rule[1pt]{4pt}{0.5pt} }\nolimits }
\newcommand{\ud}{\ensuremath{\,\mathrm{d}}}
\newcommand{\uD}{\ensuremath{\mathrm{D}}}
\newcommand{\id}[1]{\mathds{1}_{#1}}
\newcommand{\lIm}{[}
\newcommand{\rIm}{]}
\newcommand{\tbwedge}{{\textstyle \bigwedge}}
\newcommand{\tbcup}{{{\textstyle \bigcup}}}
\newcommand{\Dirac}[1]{\boldsymbol{\delta}_{#1}}
\newcommand{\Lbrack}{\llbracket}
\newcommand{\Rbrack}{\rrbracket}
\DeclareMathOperator{\Hom}{Hom}
\DeclareMathOperator{\asssp}{space}
\newcommand{\cnt}[1]{\mathscr{C}^{#1}}
\newcommand{\orthproj}[2]{\mathbf{O}^\ast({#1},{#2})}
\newcommand{\orthgroup}[1]{\mathbf{O}({#1})}
\DeclareMathOperator{\spt}{spt}
\DeclareMathOperator{\conv}{conv}
\DeclareMathOperator{\ray}{ray}
\DeclareMathOperator{\cone}{cone}
\DeclareMathOperator{\Lip}{Lip}
\DeclareMathOperator{\card}{card}
\DeclareMathOperator{\sgn}{sign}
\newcommand{\without}{\!\sim\!}
\DeclareMathOperator{\im}{im}
\newcommand{\XX}{
  \mathchoice
    {\times\mskip-15.5mu \times}
    {\times\mskip-15.5mu \times}
    {\times\mskip-15.5mu \times}
    {\times\mskip-15.5mu \times}
}
\DeclareMathOperator{\Conv}{Conv}
\DeclareMathOperator{\p}{\textbf{p}}
\date{\today}
\title{On Polyconvexity and Almgren Uniform Ellipticity
  With Respect to Polyhedral Test Pairs}
\author{
    Maciej Lesniak
}
\begin{document}

\maketitle

\begin{abstract}
    We study anisotropic geometric energy functionals defined on a class of
    $k$-dimensional surfaces in a~Euclidean space. The classical notion of ellipticity,
    coming from Almgren, for such functionals is investigated. We prove a variant of 
    a recent result of De Rosa, Lei, and Young and show that \emph{uniform} ellipticity 
    of an anisotropic energy functional with respect to real polyhedral chains 
    implies \emph{uniform} polyconvexity of the integrand.
\end{abstract}

\section{Introduction}
\hfill

Let $\ograss{n}{k}$ denote the Grassmannian of oriented $k$-planes in $\R^{n}$. Given
a~continuous integrand $F: \ograss{n}{k} \to (0, \infty)$, one defines \emph{anisotropic
  energy functional} $\Phi_{F}: \PC{k}(\R^{n}) \to \R$, acting on $k$-dimensional
polyhedral chains (see~\S{\ref{section:PolyhedralChains}} for relevant definitions)
in~$\R^{n}$ by
\begin{displaymath}
    \Phi_{F}(T) = \int F(\vec{T}(x))\ud\|T\|(x) \quad \text{for } T \in 
    \PC{k}(\R^{n}),
\end{displaymath}
where $\|T\|$ is a~Radon measure over~$\R^{n}$ and $\vec{T}$ is a~$\|T\|$-measurable
function with values in~$\ograss{n}{k}$.

Assuming $F$ is even (i.e. $F(-\xi) = F(\xi)$ for $\xi \in \ograss nk$) we may consider
$F$ to be defined on the unoriented Grassmannian and then the functional~$\Phi_{F}$ can be
extended to act on varifolds by~setting $\Phi_{F}(V) = \int F(T) \ud V(x,T)$ for
a~$k$-dimensional varifold~$V$ in~$\R^{n}$; see~\cite{Allard1972}. Almgren studied the
regularity of varifolds which are local minimisers of~$\Phi_{F}$ and proved that they are
almost everywhere regular, provided $F$~is \emph{uniformly elliptic} (abbreviated AUE)
\cite[Theorems~1.4 and~1.7]{Almgren1968}. If $F$ is elliptic, then a flat $k$-disc (or
$k$-cube) is the unique minimiser of~$\Phi_{F}$ among all competitors with the same
($k-1$)-dimensional boundary as the disc (or cube). Uniform ellipticity gives additionally
quantitative control on~$\Phi_F$; see~\S{\ref{section:Ellipticity}}.

This notion heavily depends on the choice of competitors. When competitors are rectifiable
currents, any uniformly convex norm on $\tbwedge_k \R^n$ gives rise to a uniformly
elliptic integrand; cf.~\cite[5.1.2]{Federer1969}. Burago and Ivanov
\cite[Theorem~3]{Burago2004} showed that if competitors are rectifiable currents (with
real coefficients), then \emph{semi-ellipticity} is equivalent to being~\emph{extendibly
  convex}, i.e., there exists an extension of the integrand to a~norm on the
whole~$\tbwedge_k \R^n$. Their proof relies on showing, in non-constructive way,
the~existence of a~polyhedral surface with prescribed tangent planes; see~\cite[Theorem~1,
Theorem~2]{Burago2004}.

However, if competitors are allowed only to be \emph{integral} currents, then there exists
a~continuous integrand defined on~$\ograss{4}{2}$ for which the associated energy
functional is elliptic, but the integrand cannot be extended to a~norm on the
whole~$\tbwedge_2 \R^4$; see~\cite[Theorem~5]{Burago2004}.

In~\cite[Theorem~3.2]{Rosa2023}, De Rosa, Lei, and Young provided a~constructive proof
of~\cite[Theorem~2]{Burago2004} and established an~analogous theorem
to~\cite[Theorem~3]{Burago2004} for~Lipschitz multivalued functions, namely, that
ellipticity for multigraphs is equivalent to~\emph{polyconvexity}; see~\cite[Theorem 1.7]{Rosa2023}.

The classical notion of~\emph{polyconvexity}, coming from calculus of variations,
is~defined for functionals acting on functions; cf.~\cite[Chapter~5]{Giusti2003}. In~this
setting integrands are real-valued functions defined on the space of linear
maps~$\Hom(\R^k,\R^n)$ -- for the purposes of the following discussion let us call them
\emph{classical integrands} as opposed to~\emph{geometric integrands} of the type
$\ograss{n}{k} \to \R$; cf.~\cite[5.1.1 and~5.1.9]{Federer1969}.

The~authors of~\cite{Rosa2023} use \emph{polyconvexity} for classical
integrands and briefly describe its connection with being~\emph{extendibly convex} for
related geometric integrands; see \cite[Remark 1.4, Definition 1.6, Definition 1.7]{Rosa2023}.
Later they (as we do as well) call a~geometric integrand~$F$ \emph{polyconvex} in~case it 
is extendibly convex. As a~service to the community, we~present a~detailed formal 
discussion of the relation between~\emph{extendibly convex}
geometric integrands and~\emph{polyconvex} classical integrands
in~section~\ref{section:Polyconvexity}.

A~notion stronger than \emph{polyconvexity} is~\emph{uniform polyconvexity}
(abbreviated~UPC) which involves certain constant~$c > 0$;
see~\ref{def:upolyconvexity}. In this article, we show that \emph{uniform ellipticity} for
polyhedral $k$-chains with constant~$c$ implies \emph{uniform polyconvexity} with any
constant smaller than~$c$. This can be seen as a~refinement of some results contained
in~\cite{Burago2004} and~\cite{Rosa2023}.
\begin{theorem*}[\protect{see~\ref{thm:main} for a precise statement}]
    \label{thm:main_intro}
    Assume $F$ is a geometric integrand satisfying $\Lip F < \infty$. If $F$ is Almgren
    uniformly elliptic with respect to polyhedral $k$-chains with real coefficients and
    with ellipticity constant $0 < c < \infty$, then $F$ is uniformly polyconvex with any
    constant $0 < \tilde{c} < c$.
\end{theorem*}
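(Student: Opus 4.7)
The natural plan is to reduce the uniform statement to its non-uniform counterpart, namely the theorem of Burago--Ivanov~\cite[Theorem~3]{Burago2004} (in the constructive form of De~Rosa--Lei--Young~\cite[Theorem~3.2]{Rosa2023}): for real polyhedral $k$-chains, semi-ellipticity of a geometric integrand is equivalent to extendible convexity, i.e., polyconvexity. The key idea is that shifting $F$ by a constant along $\ograss{n}{k}$ converts the slack between $c$ and $\tilde c$ into (semi-)ellipticity of a new integrand $G$, whereupon the constant we removed is reattached as the mass term demanded by uniform polyconvexity.

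Concretely, I would fix $0 < \tilde c < c$, set $G(\xi) := F(\xi) - \tilde c$ for $\xi \in \ograss{n}{k}$, and observe that $\Phi_G(T) = \Phi_F(T) - \tilde c\,\Mass(T)$ for every polyhedral chain $T$. For any competitor $T$ sharing boundary with a flat disc $D$ of orientation $\xi_0$, uniform ellipticity of $F$ then gives
\[
    \Phi_G(T) - \Phi_G(D) = \Phi_F(T) - \Phi_F(D) - \tilde c\bigl(\Mass(T) - \Mass(D)\bigr) \geq (c - \tilde c)\bigl(\Mass(T) - \Mass(D)\bigr) \geq 0,
\]
so $G$ is semi-elliptic (in fact uniformly elliptic with slack $c - \tilde c$). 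Since $F \geq c > \tilde c$, which is an a~priori bound derivable from Almgren uniform ellipticity that I would record as a preliminary lemma, one has $G > 0$; moreover $\Lip G = \Lip F < \infty$. Hence $G$ meets the hypotheses of the non-uniform result, which I would invoke to produce a norm $\tilde G$ on $\tbwedge_k \R^n$ extending $G$ on simple unit $k$-vectors.

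Setting $\tilde F(v) := \tilde G(v) + \tilde c\, m(v)$, where $m$ denotes the mass norm on $\tbwedge_k \R^n$, gives a norm on the exterior power that agrees with $F$ on simple unit $k$-vectors and for which $\tilde F - \tilde c\, m = \tilde G$ is still a norm---exactly the defining property of uniform polyconvexity with constant $\tilde c$. The principal obstacle will be bookkeeping: verifying that $G$ satisfies the precise technical prerequisites of the imported theorem (positivity, regularity, and the admissible class of competitors used in~\cite{Burago2004,Rosa2023}), and that the norm produced by their construction recombines cleanly with $\tilde c\, m$ to reproduce the definition of uniform polyconvexity stated in~\S\ref{section:Polyconvexity}. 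If the paper instead opts for a self-contained \emph{variant} of~\cite{Rosa2023} rather than a black-box reduction, the same shift $G = F - \tilde c$ should remain the source of the quantitative gain, with their construction executed on $G$ in place of $F$.
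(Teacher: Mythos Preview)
Your shift $G := F - \tilde c$ is exactly the right idea, and the paper's proof is this same computation with the shift unwound: it argues by contradiction, takes a configuration $\sum m_i \eta_i = \eta_0$ violating $\mathrm{UPC}(c_2)$, invokes the De~Rosa--Lei--Young construction (Theorem~\ref{thm:antonio}) to build a competitor $A$ with Gaussian image near $\sum m_i \Dirac{\eta_i}$, and checks that the $\mathrm{AUE}(c_1)$ inequality fails for the pair $(A,\tilde D)$, the margin $c_1 - c_2$ absorbing the approximation errors. Your black-box route, however, has a genuine gap: the claimed preliminary lemma ``$F \geq c$ follows from $\mathrm{AUE}(\mathcal{P},c)$'' is false. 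For $k=1$, $n=2$, the integrand $F(\theta) = 1 + \tfrac12\cos\theta$ on $\ograss{2}{1}\cong \sphere{1}$ has positively homogeneous extension $(x,y)\mapsto \sqrt{x^2+y^2}+\tfrac12 x$; subtracting $c$ times the Euclidean norm keeps this convex for every $c\le 1$, so $F\in\mathrm{UPC}(c)$ for $c\le 1$ and hence $F\in\mathrm{AUE}(\mathcal{P}, c)$ for all $c<1$, yet $\inf F = \tfrac12$. Thus $G$ need not be positive and cannot be fed into Burago--Ivanov or~\cite[Theorem~1.7]{Rosa2023} as stated, since those produce a \emph{norm} extension. (What one can extract is only the symmetrized bound $F(\xi)+F(-\xi)\ge 2c$, so your lemma is salvageable for \emph{even} integrands but not in the generality of the theorem.)

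Your anticipated fallback is the correct repair and is precisely what the paper does: the construction in~\cite[Theorem~3.2]{Rosa2023} (restated here as Theorem~\ref{thm:antonio}) makes no reference to any integrand, so the contrapositive ``if $\sum m_i G(\eta_i) < G(\eta_0)$ then $G$ is not semi-elliptic'' goes through regardless of the sign of $G$---equivalently, one reproves the needed implication for possibly signed $G$ rather than citing it. What $\mathrm{UPC}(\tilde c)$ actually demands is only the inequality $\sum m_i G(\eta_i) \geq G(\eta_0)$, not that $G$ extend to a norm, so nothing is lost. Carrying this out still requires the rational-slope approximation and the $\epsilon$-bookkeeping that occupy most of the paper's proof, so the work saved by the shift is notational rather than substantive.
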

Our proof is based on \cite[Theorem 3.2]{Rosa2023}, which allows to construct polyhedral
$k$-chains with Gaussian measures close to a given measure and provides control of the
geometry of the constructed surface. Note also that the reverse implication (with
$\tilde{c} = c$) holds by a~simple argument involving the Stokes theorem;
cf.~\cite[5.1.2]{Federer1969}.

This paper is organised as follows. In Section~\ref{section:Preliminaries}, we recall the
basic terminology that will be used throughout the paper. In
Section~\ref{section:Polyconvexity}, we present a formal discussion on the relationship
between the notions of~\emph{polyconvex} and~\emph{extendibly convex} integrands,
demonstrating their equivalence. Moreover, in the same section, we recall the definition
of \emph{uniform polyconvexity} for integrands.  Section~\ref{section:PolyhedralChains} is
devoted to the classical definitions of polyhedral chains and Gaussian measures. Finally,
in~Section~\ref{section:Ellipticity}, we define~\emph{ellipticity} for polyhedral test
pairs and prove the main result.

\section{Preliminaries}
\label{section:Preliminaries}

In~principle we shall follow the notation of Federer;
see~\cite[pp.~669--671]{Federer1969}. In~particular, $A \without B$ is the set-theoretic
difference and $\natp$ denotes the set of positive integers.  Diverting from Federer, if
$-\infty < s < r < \infty$, we~occasionally write $(s,r)$, $[s,r]$, $(s,r]$, and~$[s,r)$
to denote appropriate intervals in~$\R$.  We shall use letters~$k$ and~$n$ to denote two
integers satisfying $0 \le k < n$. We say that a~function $f$ defined on some open subset
of a~Banach space with values in another Banach space is of class~$\cnt{l}$ if it is
$l$-times differentiable and $\uD^l f$ is~continuous; cf~\cite[3.1.11]{Federer1969}.

\begin{definition}[\protect{cf.~\cite[1.3.2]{Federer1969}}]
    By $\Lambda(n, k)$ we denote set of all increasing maps of $\{1,\ldots,k\}$ into
    $\{1,\ldots,n\}$.
\end{definition}

\begin{remark*}
    Note that any $\lambda \in \Lambda(n,k)$ is uniquely determined by $\im \lambda$ which
    is a $k$-element subset of $\{1,2,\ldots,n\}$; thus, choosing
    $\lambda \in \Lambda(n,k)$ corresponds to choosing $k$-elements out of
    $\{1,2,\ldots,n\}$.
\end{remark*}

\begin{definition}
    Let $\xi \in \tbwedge_k \R^n$. We define the vector space associated with
    $\xi$ by
    \begin{displaymath}
        \asssp \xi = \R^n \cap \bigl\{ v : v \wedge \xi = 0 \bigr\} \,.
    \end{displaymath}
\end{definition}

\begin{remark}[\protect{\cite[1.6.1]{Federer1969}}]
    An element $\xi \in \tbwedge_k \R^n$ is simple if and only if
    $\dim \asssp \xi = k$.
\end{remark}

\begin{definition}[\protect{\cite[3.2.28(b)]{Federer1969}}]
    We define the \emph{oriented Grassmannian $\ograss{n}{k}$} as
    \begin{displaymath}
        \ograss{n}{k} = \tbwedge_k \R^n \cap \bigl\{ \xi : \xi \text{ is simple}
        ,\, |\xi| = 1 \bigr\} \,.
    \end{displaymath}
\end{definition}

\begin{remark}
    Let $\xi \in \tbwedge_k \R^n$. Recall from~\cite[1.8.1]{Federer1969} that
    $\|\xi\| = |\xi|$ if and only if $\xi$ is simple.
\end{remark}

\begin{definition}
    Given a vectorspace $X$ we write $\End{X}$ for the space of all linear maps
    mapping~$X$ to~$X$, i.e., $\End{X} = \Hom(X,X)$.
\end{definition}

\begin{definition}[\protect{\cite[1.7.4]{Federer1969}}]
    A linear map $j \in \Hom(\R^k,\R^n)$ is called an~\emph{orthogonal
      injection} if $j^* \circ j = \id{\R^k}$. If $p \in \Hom(\R^n,\R^k)$ and
    $j = p^*$ is an orthogonal injection, then we say that $p$ is
    an~\emph{orthogonal projection}. The set of all orthogonal projections
    $\R^n \to \R^k$ is denoted $\orthproj nk$.
\end{definition}

\begin{remark}
    To each $\lambda \in \Lambda(n, k)$ corresponds the map $\p_{\lambda} \in
    \orthproj nk$ defined as
    \begin{displaymath}
        \p_{\lambda}(x) = (x_{\lambda(1)}, \ldots, x_{\lambda(k)})
        \quad \text{for $x=(x_1,\ldots,x_n) \in \R^{n}$} \,.
    \end{displaymath}
\end{remark}

\begin{definition}[\protect{cf.~\cite[p.~14]{Rockafellar1970}}]
    Let $X$ be a vectorspace and $A \subseteq X$. We define
    \begin{displaymath}
        \ray A = X \cap \{ ta : 0 \le t < \infty ,\, a \in A \}
        \quad \text{and} \quad
        \cone A = \conv (\ray A) \,.
    \end{displaymath}
\end{definition}

\begin{definition}
    Let $X$ be a normed vectorspace and $f : X \to \R$. We say that $f$ is
    \emph{positively homogeneous} if $f(tx) = t f(x)$ whenever $x \in X$ and
    $0 \le t < \infty$.
\end{definition}

\begin{definition}
    Let $X$ be a vectorspace and $f : X \to \R$. We say that $f$ is
    a~\emph{gauge} if it is convex non-negative and positively homogeneous.
    We~say that $f$ is a~\emph{strict gauge} if $f$ is a gauge and
    \begin{displaymath}
        f(x+y) < f(x) + f(y)
        \quad \text{for all $x,y \in X \without \{0\}$ such that $x \notin \ray \{y\}$} \,.
    \end{displaymath}
\end{definition}

\begin{definition}
    A \emph{signed measure} over~$\ograss nk$ is a Daniell integral on the
    lattice of functions~$\ccspace{\ograss nk}$ as defined
    in~\cite[2.5.6]{Federer1969}.
\end{definition}

\begin{definition}[\protect{\cite[2.5.5]{Federer1969}}]
    \emph{Total variation} of a signed measure~$\mu$ over $\ograss nk$ is
    defined as
    \begin{displaymath}
        \|\mu\|_{\mathrm{TV}} := |\mu|(\ograss{n}{k}), \quad \text{where} \quad
        |\mu| = \mu^{+}+\mu^{-} \,.
    \end{displaymath}
\end{definition}

\section{Polyconvexity}
\label{section:Polyconvexity}

\begin{miniremark}
    The notion of \emph{polyconvexity} has been introduced, in the context of calculus of
    variations, for integrands defined on the space of linear maps $\Hom(\R^k,\R^n)$; see,
    e.g.,~\cite[Ch.5, p.~147]{Giusti2003}. In~our case integrands are functions
    on~$\ograss nk \subseteq \tbwedge_k \R^n$ and the corresponding notion of
    polyconvexity can be found, e.g., in~\cite[Definition 1.3]{Rosa2023}. The relation between these
    two notions has been very briefly explained in a~small remark \cite[Remark 1.4]{Rosa2023}. 
    As a~service to the community, as well as for the sake of author's
    own understanding, we present here a~detailed formal discussion of this relation.
\end{miniremark}

\begin{remark}
    \label{rem:minor}
    Assume $f \in \Hom(\R^k,\R^n)$ and $A \in \R^{n \times k}$ is the matrix
    of~$f$ written in the standard bases. For any $\lambda \in \Lambda(n,k)$ the
    matrix of the map $\p_{\lambda} \circ f$ is obtained by choosing the rows
    of~$A$ indexed by~$\lambda$ and $\det(\p_{\lambda} \circ f)$ is
    a~$k \times k$ minor of the matrix~$A$. Having this in mind one readily sees
    that the following two definitions are a special case of the ones given
    by~Giusti.
\end{remark}

\begin{definition}
    Let $f \in \Hom(\R^k,\R^n)$. We define $M(f) \in \R^{\Lambda(n,k)}$ by
    \begin{displaymath}
        M(f)(\lambda) = \det(\p_{\lambda} \circ f)
        \quad \text{for $\lambda \in \Lambda(n,k)$} \,.
    \end{displaymath}
\end{definition}

\begin{definition}[\protect{cf.~\cite[Ch.5, p.~147]{Giusti2003}}]
    \label{def:Giusti_polyconvex}
    A function $G : \Hom(\R^k,\R^n) \to [0,\infty)$ is called \emph{(strictly)
      polyconvex} if there exists a (strict) gauge
    $h : \R^{\Lambda(n,k)} \to [0,\infty)$ such that $G = h \circ M$.
\end{definition}

\begin{remark}
    Note that our definition differs from~\cite[Ch.5]{Giusti2003} in the following way:
    we enforce~$h$ to be positively homogeneous, which immediately yields that $G$ is 
    positively $k$-homogeneous and determines the~growth of~$G$ at infinity.
\end{remark}

\begin{definition}
    An~\emph{integrand} is a continuous non-negative function $F : \ograss nk \to \R$.
\end{definition}

\begin{remark}
    Note that an integrand $F$ is defined only on the Grassmannian $\ograss nk$. Later it
    will be useful to consider positively homogeneous extensions of~$F$ onto the whole
    of~$\tbwedge_k \R^n$.
\end{remark}

\begin{remark}
    Let $(e_1, \ldots, e_k)$ be the standard basis of $\R^k$,
    $E = e_1 \wedge \cdots \wedge e_k \in \ograss kk$, $\xi \in \ograss nk$,
    $p \in \orthproj nk$ be such that $\tbwedge_k p^* E = \xi$, and
    $A \in \R^{n \times k}$ be the matrix of~$p^*$ in the standard bases. Then the columns
    of~$A$ form an~orthonormal basis of~$\asssp \xi$ which is positively oriented with
    respect to~$\xi$. This explains the connection of the following definition with the
    one given by De~Rosa, Lei, and Young.
\end{remark}

\begin{definition}[\protect{cf.~\cite[1.3]{Rosa2023}}]
    \label{def:DRLY_polyconvex}
    Let $(e_1, \ldots, e_k)$ be the standard basis of $\R^k$. An integrand $F$
    is said to be (strictly) \emph{polyconvex} if there is a~(strict) gauge
    \begin{displaymath}
        h : \R^{\Lambda(n,k)} \to [0, +\infty)
        \quad \text{such that} \quad
        F(\xi) =  h(M(p_{\xi}^*))
    \end{displaymath}
    whenever $\xi \in \ograss nk$ and $p_{\xi} \in \orthproj nk$ satisfies
    $\tbwedge_k p_{\xi}^*(e_1 \wedge \cdots \wedge e_k) = \xi$.
\end{definition}

\begin{remark}
    \label{def_poly_indep}
    Definition \ref{def:DRLY_polyconvex} is independent of the choice
    of~$p_{\xi}$ for $\xi \in \ograss nk$. Indeed, let $\xi \in \ograss nk$ and
    $p_1, p_2 \in \orthproj nk$ be such that
    $\tbwedge_k p_{i}^*(e_1 \wedge \cdots \wedge e_k) = \xi$ for
    $i \in \{1,2\}$. There exists $T \in \orthgroup k$ such that
    $p_1^* = p_2^* \circ T$ and we get
    \begin{displaymath}
        M(p_1^*)(\lambda)
        = \det(\p_{\lambda} \circ p_1^*)
        = \det(\p_{\lambda} \circ p_2^* \circ T)
        = \det(\p_{\lambda} \circ p_2^*) \det T
        = M(p_2^*)(\lambda) \det T
    \end{displaymath}
    for $\lambda \in \Lambda(n,k)$. However, we also know that
    \begin{multline}
        \xi = \tbwedge_k p_{2}^*(e_1 \wedge \cdots \wedge e_k)
        = \tbwedge_k p_{1}^*(e_1 \wedge \cdots \wedge e_k)
        = \tbwedge_k (p_{2}^* \circ T) (e_1 \wedge \cdots \wedge e_k)
        \\
        = \tbwedge_k p_{2}^* \circ \tbwedge_k T (e_1 \wedge \cdots \wedge e_k)
        = \det T \tbwedge_k p_{2}^* (e_1 \wedge \cdots \wedge e_k) \,;
    \end{multline}
    hence, $\det T = 1$.
\end{remark}

\begin{remark}
    Let $e_1,\ldots,e_k \in \R^k$ be the standard basis of $\R^k$,
    $E = e_1 \wedge \cdots \wedge e_k \in \ograss kk$, $\xi \in \ograss nk$, and
    $p \in \orthproj nk$ be such that $\tbwedge_k p^* E = \xi$. In~\ref{def_poly_indep} we
    showed that $M(p)$ does not depend on the particular choice of~$p$ so $M$ could
    actually be defined on~$\ograss nk$. Let $u_1,\ldots,u_n$ be the standard basis
    of~$\R^n$. For $\lambda \in \Lambda(n,k)$ set
    $u_{\lambda} = u_{\lambda(1)} \wedge \cdots \wedge u_{\lambda(k)}$ and note that
    $u_{\lambda} = \tbwedge_k \p_{\lambda}^* E$; thus,
    \begin{displaymath}
        \xi \bullet u_{\lambda}
        = \tbwedge_k p^* E \bullet \tbwedge_k \p_{\lambda}^* E
        = \tbwedge_k (p_{\lambda} \circ p^*) E \bullet E
        = \det (p_{\lambda} \circ p^*)
        = M(p^*)(\lambda) \,.
    \end{displaymath}
\end{remark}

\begin{definition}
    Define $N : \tbwedge_k \R^n \to \R^{\Lambda(n,k)}$ by the formula
    \begin{displaymath}
        N(\xi)(\lambda) = \xi \bullet u_{\lambda}
        \quad \text{for $\xi \in \tbwedge_k \R^n$ and $\lambda \in \Lambda(n,k)$} \,.
    \end{displaymath}
\end{definition}

\begin{remark}
    Note that $N$ is a linear isometry.
\end{remark}

\begin{corollary}
    \label{cor:two_def_polyconvexity}
    Let $F$ be an integrand. The following are equivalent
    \begin{enumerate}
    \item $F$ is (strictly) polyconvex in the sense of
        definition~\ref{def:DRLY_polyconvex}.
    \item There exists of (strict) gauge $h : \R^{\Lambda(n,k)} \to [0,\infty)$ such that
        $F(\xi) = h(N(\xi))$ for $\xi \in \ograss nk$.
    \end{enumerate}
\end{corollary}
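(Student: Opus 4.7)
The plan is to observe that the two clauses are linked by the pointwise identity $N(\xi) = M(p_{\xi}^{*})$, already computed in the remark immediately preceding the definition of~$N$. Given this identity, the corollary is a bookkeeping statement: the same candidate gauge~$h$ witnesses both properties, and the strictness clause transfers automatically since strictness is a property of~$h$ alone.

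First, I would record the identity explicitly. Fix $\xi \in \ograss nk$ and any $p_\xi \in \orthproj nk$ with $\tbwedge_k p_\xi^{*}(e_1 \wedge \cdots \wedge e_k) = \xi$. The preceding remark shows that, for every $\lambda \in \Lambda(n,k)$,
\[
N(\xi)(\lambda) \;=\; \xi \bullet u_{\lambda} \;=\; M(p_\xi^{*})(\lambda),
\]
so $N(\xi) = M(p_\xi^{*})$ as elements of~$\R^{\Lambda(n,k)}$. In particular, $M(p_\xi^{*})$ depends only on~$\xi$ and not on the auxiliary choice of~$p_\xi$, which is independently established in Remark~\ref{def_poly_indep} and is here manifest from the coordinate-free formula $\xi \bullet u_\lambda$.

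With this identity in hand, the equivalence is immediate. If (a) holds with gauge~$h$, then for every $\xi \in \ograss nk$,
\[
F(\xi) \;=\; h\bigl(M(p_\xi^{*})\bigr) \;=\; h(N(\xi)),
\]
so the same~$h$ witnesses (b). Conversely, if (b) holds with gauge~$h$, then $F(\xi) = h(N(\xi)) = h(M(p_\xi^{*}))$ for any admissible~$p_\xi$, giving (a). The ``strict'' version is handled by precisely the same calculation, since strictness is a property of~$h$ as a function on $\R^{\Lambda(n,k)}$, independent of whether $h$ is subsequently composed with~$M$ or with~$N$.

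I do not anticipate any genuine obstacle: all the substantive content is carried by the identity $\xi \bullet u_\lambda = M(p_\xi^{*})(\lambda)$ and by the independence-of-$p_\xi$ assertion of Remark~\ref{def_poly_indep}, both of which are established in the preceding paragraphs. The only minor care needed is to note explicitly that both definitions produce the \emph{same} admissible set of gauges~$h$, so that the ``strict'' qualifier survives the equivalence.
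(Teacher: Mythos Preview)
Your proposal is correct and is exactly the argument the paper intends: the corollary is stated without proof precisely because it follows immediately from the identity $N(\xi)(\lambda) = \xi \bullet u_{\lambda} = M(p_{\xi}^{*})(\lambda)$ established in the remark just before the definition of~$N$, together with the independence-of-$p_{\xi}$ from Remark~\ref{def_poly_indep}. You have simply made explicit what the paper leaves implicit.
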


\begin{corollary}
    \label{cor:Giusti_vs_DRLY}
    Assume $h : \R^{\Lambda(n,k)} \to [0,\infty)$. The following are equivalent
    \begin{enumerate}
    \item $h \circ M : \Hom(\R^k,\R^n) \to \R$ is (strictly) polyconvex in the
        sense of~\ref{def:Giusti_polyconvex};
    \item $h \circ N : \tbwedge_k \R^n \to \R$ is (strictly) polyconvex in the
        sense of~\ref{def:DRLY_polyconvex};
    \item $h | N \lIm \ograss nk \rIm$ can be extended to a~(strict) gauge
        on~$\R^{\Lambda(n,k)}$.
    \end{enumerate}
\end{corollary}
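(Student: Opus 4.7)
The plan is to reduce the three-way equivalence to a single computational identity. Specifically, I would first establish that
\begin{displaymath}
    M(f) = N(\tbwedge_k f(E))
    \quad \text{for every } f \in \Hom(\R^k,\R^n),
\end{displaymath}
where $E = e_1 \wedge \cdots \wedge e_k \in \tbwedge_k \R^k$ is built from the standard basis of~$\R^k$. This is verified by expanding $f(e_1) \wedge \cdots \wedge f(e_k)$ in the basis $\{u_\lambda : \lambda \in \Lambda(n,k)\}$ of~$\tbwedge_k \R^n$ and observing that the $u_\lambda$-coefficient equals $\det(\p_\lambda \circ f) = M(f)(\lambda)$. Since every simple $k$-vector arises as $\tbwedge_k f(E)$ for a suitable $f$, the identity also identifies $M\lIm \Hom(\R^k,\R^n) \rIm$ with $N\lIm \ray \ograss{n}{k} \rIm$.

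Granted the identity, the equivalence (b)~$\Leftrightarrow$~(c) is essentially tautological via Corollary~\ref{cor:two_def_polyconvexity}: each of the two conditions asserts the existence of a (strict) gauge~$g$ on~$\R^{\Lambda(n,k)}$ satisfying $g(y) = h(y)$ for every $y \in N\lIm \ograss{n}{k} \rIm$.

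For (a)~$\Leftrightarrow$~(c), I would use the identity above to rephrase~(a) as the statement that there exists a (strict) gauge~$g$ on~$\R^{\Lambda(n,k)}$ with $g = h$ on the cone $N\lIm \ray \ograss{n}{k} \rIm$. The direction (a)~$\Rightarrow$~(c) is then immediate by restriction, since $N\lIm \ograss{n}{k} \rIm \subseteq N\lIm \ray \ograss{n}{k} \rIm$. For the converse, starting from a (strict) gauge extension~$g$ of $h|N\lIm \ograss{n}{k} \rIm$, I would invoke positive homogeneity of~$g$ to propagate the equality $g = h$ from~$N\lIm \ograss{n}{k} \rIm$ to the whole cone $N\lIm \ray \ograss{n}{k} \rIm$, which by the identity is precisely the image of~$M$; this yields $g \circ M = h \circ M$ on~$\Hom(\R^k,\R^n)$, using the implicit consistency that $h \circ M$ is $k$-homogeneous in~$f$ (equivalently, $h$ is positively homogeneous along the rays in $N\lIm \ray \ograss{n}{k} \rIm$).

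The main delicate point is the strict case: one must verify that strict subadditivity of the witness gauge is preserved under the correspondence. Since strict subadditivity is a property of~$g$ on the ambient space~$\R^{\Lambda(n,k)}$ (rather than of its relationship with~$h$ on the image of~$M$ or on~$N\lIm \ograss{n}{k} \rIm$), the same gauge~$g$ serves as witness in both directions of each equivalence, and the strict case reduces to matching Definition~\ref{def:Giusti_polyconvex} and Definition~\ref{def:DRLY_polyconvex} through the identity.
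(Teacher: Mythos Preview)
Your approach via the identity $M(f) = N\bigl(\tbwedge_k f(E)\bigr)$ is exactly the content of the remark preceding the definition of~$N$ in the paper, and it is the intended route; the paper records the corollary without proof precisely because this identity together with Corollary~\ref{cor:two_def_polyconvexity} is meant to make it immediate. Your treatment of (b)~$\Leftrightarrow$~(c) and of (a)~$\Rightarrow$~(c) is correct and matches that reasoning.

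However, your argument for (c)~$\Rightarrow$~(a) contains a genuine gap. You write that positive homogeneity of the witness gauge~$g$ lets you ``propagate the equality $g = h$'' from $N\lIm \ograss nk \rIm$ to the cone $N\lIm \ray \ograss nk \rIm$; but homogeneity of~$g$ only determines~$g$ on that cone, it does not force~$h$ to agree with~$g$ there. The phrase ``implicit consistency that $h\circ M$ is $k$-homogeneous'' names an \emph{extra hypothesis}, not a consequence of~(c). In fact (c)~$\Rightarrow$~(a) is false without it: take $k=1$, $n=2$, so that $N$ is the identity on $\R^{\Lambda(2,1)} \simeq \R^2$, $N\lIm\ograss 21\rIm$ is the unit circle, and $\im M = \R^2$. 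With $h(y)=|y|^2$ the restriction $h|N\lIm\ograss 21\rIm \equiv 1$ extends to the gauge $|\cdot|$, so (c) (and likewise (b)) holds; yet $h\circ M(f)=|f(1)|^2$ cannot equal $g\circ M$ for any gauge~$g$, since that would force $g=h$ on all of~$\R^2$ while $|\cdot|^2$ is not positively homogeneous, so (a) fails. The corollary as literally stated therefore requires the standing assumption that $h$ is positively homogeneous on $\im M = N\lIm \ray \ograss nk \rIm$; under that assumption your argument goes through and is the same as what the paper has in mind.
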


\begin{remark}
    We could have defined polyconvexity in~\ref{def:Giusti_polyconvex}
    and~\ref{def:DRLY_polyconvex} without enforcing~$h$ to be positively homogeneous (only
    convex) and we would get an analogous corollary to~\ref{cor:Giusti_vs_DRLY}. Observe,
    however, that if $h : \R^{\Lambda(n,k)} \to [0,\infty)$ is only convex, then,
    in~general, it might not be possible to find a~gauge which agrees with~$h$
    on~$N \lIm \ograss nk \rIm$; hence, not assuming $h$ to be positively homogeneous
    in~\ref{def:DRLY_polyconvex} significantly affects the class of polyconvex integrands.

    To support this claim let $\sphere{} = \R^{\Lambda(n,k)} \cap \{ y : |y| = 1 \}$,
    choose $e \in N \lIm \ograss nk \rIm \subseteq \sphere{}$, $1 < r < \infty$, and
    $(1+r)^2/r < p < \infty$ (note that $(1+r)^2/r \ge 4$). Consider the non-negative
    convex function $h : \R^{\Lambda(n,k)} \to [0,\infty)$ given by
    \begin{displaymath}
        h(x) = |x-re|^p
        \quad \text{for $x \in \R^{\Lambda(n,k)}$} \,.
    \end{displaymath}
    Next, define a~non-negative positively homogeneous function
    $f : \R^{\Lambda(n,k)} \to [0,\infty]$ by setting
    \begin{displaymath}
        f(x) = h(x)
        \quad \text{and} \quad
        f(tx) = t f(x)
        \quad \text{for $x \in \sphere{}$ and $0 \le t < \infty$} \,.
    \end{displaymath}
    Fix $x = -e$ and $u \in \sphere{}$ such that $e \perp u$. Choose a~geodesic
    $\gamma : \R \to \sphere{}$ such that $\gamma(0) = x$, $\gamma'(0) = u$ (then,
    necessarily $\gamma''(0) = -x$). Direct computations show
    \begin{gather}
        (f \circ \gamma)''(0)
        = \uD^2f(x)uu - \uD f(x) x
        = \uD^2f(x)uu - f(x)
        = \uD^2f(x)uu - h(x) \,,
        \\
        (f \circ \gamma)''(0)
        = (h \circ \gamma)''(0)
        = \uD^2h(x)uu - \uD h(x) x \,,
        \\
        \uD h(x) x = p |x-re|^{p-2} (x - re) \bullet x \,,
        \\
        \text{and} \quad
        \uD^2h(x) uu = p(p-2) |x-re|^{p-4} \bigl( (x - re) \bullet u \bigr)^2 + p |x-re|^{p-2} \,;
    \end{gather}
    hence, recalling $x = -e$ and $e \perp u$,
    \begin{multline}
        \uD^2f(x)uu
        = \uD^2h(x)uu - \uD h(x) x + h(x)
        = |x-re|^{p-2} \bigl( p - p  (x - re) \bullet x + |x-re|^2 \bigr)
        \\
        = |x-re|^{p-2} \bigl(- pr + (1+r)^2 \bigr) \,.
    \end{multline}
    Since $p > (1+r)^2/r$ we get $\uD^2f(x)uu < 0$ so $f$ is not convex.
\end{remark}

\subsection*{Convex hulls}

\begin{definition}[\protect{cf.~\cite[2.5.19]{Federer1969}}]
    Assume $X$ is a set and $x \in X$. The \emph{Dirac measure} over~$X$ having a~single
    atom at~$x$ is denoted by
    \begin{displaymath}
        \Dirac{x}(A) = 
        \begin{cases}
          1 \text{\quad if \quad} x \in A \\
          0 \text{\quad if \quad} x \notin  A
        \end{cases}
        \quad \text{for $A \subseteq X$} \,.
    \end{displaymath}
\end{definition}

\begin{definition}
    \label{def:convexhull}
    Let $F$ be an integrand. The \emph{convex positively homogeneous hull} of~$F$ is
    defined by
    \begin{multline}
        \Conv_{F}(\xi)
        = \inf \left\{ \int{}{} F \ud \mu
            :
            \begin{gathered}
                N \in \natp ,\,
                m_1, \ldots, m_N \in (0,\infty) ,\,
                \eta_1,\ldots,\eta_n \in \ograss nk ,\,
                \\
                \mu = \textsum{i=1}{N} m_{i}\Dirac{\eta_{i}} ,\,
                \quad
                \xi = \textint{}{} \eta \ud \mu(\eta)
            \end{gathered}
        \right\}
        \\
        \text{for $\xi \in \tbwedge_k \R^n$} \,.
    \end{multline}
\end{definition}

\begin{remark}
    $\Conv_{F}$ is the biggest convex positively homogeneous function (i.e. a~gauge) not
    exceeding~$F$ on~$\ograss nk$ pointwise.
\end{remark}

\begin{lemma}
    \label{lem:conv_lemma}
    An integrand $F$ is polyconvex if and only if $F = \Conv_{F} \vert_{\ograss nk}$.
\end{lemma}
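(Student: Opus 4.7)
The plan is to exploit the equivalence between polyconvexity of $F$ and the existence of a gauge extension of $F$ to all of $\tbwedge_k \R^n$: since $N$ is a linear isometry, Corollary~\ref{cor:Giusti_vs_DRLY} translates polyconvexity of $F$ into the existence of a gauge $g : \tbwedge_k \R^n \to [0,\infty)$ such that $g\vert_{\ograss nk} = F$. The lemma then reduces to showing that such a gauge extension exists if and only if $F = \Conv_F\vert_{\ograss nk}$.

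I would first verify the (unproved) remark following Definition~\ref{def:convexhull}, namely that $\Conv_F$ is itself a gauge on $\tbwedge_k \R^n$. Non-negativity is immediate from $F \ge 0$; positive homogeneity follows by rescaling the masses $m_i \mapsto t m_i$ of a representing measure; and subadditivity (hence, with positive homogeneity, convexity) follows by summing representing measures of $\xi_1$ and $\xi_2$ to produce a representation of $\xi_1 + \xi_2$. Finiteness of $\Conv_F(\xi)$ at every point requires that at least one admissible representation exists for each $\xi$; this is secured by the fact that $\ograss nk$ is closed under $\xi \mapsto -\xi$ and contains the standard basis vectors $u_\lambda$, so any $\xi = \sum_\lambda c_\lambda u_\lambda$ admits the representation $\sum_\lambda \max(c_\lambda, 0)\, u_\lambda + \sum_\lambda \max(-c_\lambda, 0)\, (-u_\lambda)$.

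For the forward direction, assume $F$ is polyconvex and let $g$ be a gauge on $\tbwedge_k \R^n$ extending $F$. For $\xi \in \ograss nk$, the trivial one-term representation gives $\Conv_F(\xi) \le F(\xi)$. Conversely, for any admissible representation $\mu = \sum_i m_i \Dirac{\eta_i}$ with $\xi = \sum_i m_i \eta_i$, sublinearity of $g$ together with $g = F$ on $\ograss nk$ yields
\begin{displaymath}
    \int F \ud \mu = \sum_i m_i g(\eta_i) \ge g\Big(\sum_i m_i \eta_i\Big) = g(\xi) = F(\xi),
\end{displaymath}
so $\Conv_F(\xi) \ge F(\xi)$ after taking the infimum over representations.

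For the backward direction, if $F = \Conv_F\vert_{\ograss nk}$, then $\Conv_F$ itself is a gauge on $\tbwedge_k \R^n$ extending $F$, and polyconvexity follows from Corollary~\ref{cor:Giusti_vs_DRLY}. No step presents a genuine obstacle: the entire content of the argument lies in the one-line sublinearity estimate above, together with the bookkeeping needed to confirm that $\Conv_F$ is finite-valued and satisfies the gauge axioms.
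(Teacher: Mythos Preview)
Your proposal is correct and follows essentially the same route as the paper's proof: both identify polyconvexity with the existence of a gauge extension of~$F$ to all of~$\tbwedge_k\R^n$, use the trivial one-term representation to get $\Conv_F|_{\ograss nk} \le F$, use the gauge (sublinearity/maximality) property to get the reverse inequality, and take $\Conv_F$ itself as the required gauge for the backward direction. Your write-up is more detailed in that you explicitly verify the remark following Definition~\ref{def:convexhull} (that $\Conv_F$ is a finite-valued gauge), which the paper states without proof.
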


\begin{proof}
    Assume $F$ is polyconvex so that there exists a gauge $h : \R^{\Lambda(n,k)} \to \R$
    such that $F = h \circ N|\ograss nk$. Since $N$ is a linear isometry, we see that
    $h \circ N$ is a gauge on $\tbwedge_k \R^n$ so
    \begin{displaymath}
        F = h \circ N|\ograss nk \le \Conv_{F}|\ograss nk \le F \,.
    \end{displaymath}
    On the other hand, if $\Conv_{F}|\ograss nk = F$, we set $h = \Conv_{F} \circ N^{-1}$
    to get $F = h \circ N|\ograss nk$.
\end{proof}

\subsection*{Uniform polyconvexity}

\begin{definition}
    \label{def:upolyconvexity}
    Let $0 < c < \infty$ and $F$ be an integrand. We say that $F$ is
    \emph{uniformly polyconvex (with constant $c$)} if
    \begin{displaymath}
        \textsum{i=1}{d} m_i F( \eta_{i} ) - F( \eta_0 )
        \ge c \bigl( \textsum{i=1}{d} m_i|\eta_{i}| - |\eta_{0}| \bigr)
    \end{displaymath}
    whenever $d \in \natp$, $m_1, \ldots, m_d \in (0,\infty)$,
    $\eta_{0}, \ldots, \eta_{d} \in \ograss nk$ are such that
    $\eta_{0} = \textsum{i=1}{d} m_i \eta_{i}$.
    \\
    If $F$ is uniformly polyconvex with constant $c$, we shall write
    $F \in \mathrm{UPC(c)}$.
\end{definition}

\begin{remark}
    Assume $F \in \mathrm{UPC(c)}$ for some $0 < c < \infty$, $d \in \natp$,
    $m_1, \ldots, m_d \in (0,\infty)$, $\eta_{0}, \ldots, \eta_{d} \in \ograss nk$, and
    $\eta_{0} = \textsum{i=1}{d} m_i \eta_{i}$. Set
    $\mu = \textsum{i=1}{N} m_{i}\Dirac{\eta_{i}}$. Then we have
    \begin{displaymath}
        \textint{}{} F \ud \mu = \textsum{i=1}{d} m_i F( \eta_{i} ) \ge F( \eta_0 ) \,;
    \end{displaymath}
    hence, $\Conv_{F}(\eta_0) = F(\eta_0)$. Since $\eta_0$ was arbitrary, we see
    from~\ref{lem:conv_lemma} that $F$ is polyconvex. Consequently, we get that
    uniformly polyconvexity implies polyconvexity.
\end{remark}

\section{Polyhedral Chains}
\label{section:PolyhedralChains}

\begin{miniremark}
    For the convenience of the reader and the sake of completeness we recall
    here the classical definitions concerning polyhedral chains.
\end{miniremark}

\begin{definition}[\protect{\cite[4.1.7]{Federer1969}}]
    Let $U$ be an open subset of $\R^{n}$ and $m$ be nonnegative integer.
    By~$\Forms{m}(U)$ we denote set of \emph{differential forms of~degree~$m$}
    and class~$\cnt{\infty}$ on~$U$ with real coefficients, whose support is a~compact
    subset of~$U$. By $\Curr{m}(U)$ we denote set of \emph{$m$-dimensional
      currents} in~$U$, i.e., the space of~continuous linear functionals
    on~$\Forms{m}(U)$.
\end{definition}

\begin{remark}[\protect{\cite[4.1.5 and 4.1.7]{Federer1969}}]
    In case $T \in \Curr{m}(U)$ is \emph{representable by integration}, then
    $T = \|T\| \wedge \vec{T}$, where $\|T\|$ is a~Radon measure over~$U$ and
    $\vec{T}$ is a~$\|T\|$-measurable function with values in~$\tbwedge_m \R^n$.
\end{remark}

\begin{definition}[\protect{\cite[4.1.8]{Federer1969}}]
    Let $A$ and $B$ be open subsets of $\R^{m}$ and $\R^{n}$. Moreover, let
    \begin{displaymath}
        p : A \times B \to A \quad \text{and} \quad q : A \times B \to B
    \end{displaymath}
    be the canonical projections onto the first and second factor respectively.
    For any $S \in \Curr{i}(A)$ and $T \in \Curr{j}(B)$ the
    \emph{Cartesian product} of $S$ and $T$ is the current
    \begin{displaymath}
        S \times T \in \Curr{i+j}(A \times B)
    \end{displaymath}
    characterised by the following conditions
    \begin{displaymath}
        (S \times T)(p^{\#}\alpha \wedge q^{\#}\beta)
        =
        \left\{
            \begin{aligned}
              &S(\alpha)T(\beta) && \text{if $k=i$}
              \\
              &0 && \text{if $k \neq i$}
            \end{aligned}
        \right.
        \quad \text{for $\alpha \in \Forms{k}(A)$ and $\beta \in \Forms{i+j-k}(B)$} \,.
    \end{displaymath}
\end{definition}

\begin{definition}[\protect{\cite[4.1.11]{Federer1969}}]
    Let $H: \R^{n} \times \R \times \R^{n} \to \R^{n}$ be a map such that
    \begin{displaymath}
        H(x,t,y)=(1-t)x+ty \quad \text{whenever} \quad x,y \in \R^{n} \quad
        \text{and} \quad t \in \R \,.
    \end{displaymath}
    For any currents
    $S \in \Curr{i}(\R^{n})$ and $T \in \Curr{j}(\R^{n})$ with
    compact supports, the \emph{join} of $S$ and $T$ is defined as
    \begin{displaymath}
        S \XX T = H_{\#}(S \times [0,1] \times T) \in \Curr{i+1+j}(\R^{n})
    \end{displaymath}
\end{definition}

\begin{definition}[\protect{\cite[4.1.11]{Federer1969}}]
    Let $u_0,\ldots,u_k \in \R^{n}$. An oriented $k$-simplex
    $\Lbrack u_0,\ldots,u_m \Rbrack$ is defined inductively as
    \begin{gather}
        \Lbrack u \Rbrack = \Dirac{u} \text{ whenever $u \in \R^n$} \,,
        \\
        \Lbrack u_0,\ldots,u_k \Rbrack = \Lbrack u_0 \Rbrack \XX 
        \Lbrack u_1,\ldots,u_k \Rbrack \in \Curr{k}(\R^{n})
        \quad \text{if $k \in \natp$}\,.
    \end{gather}
    If $u_0, \ldots, u_k$ are affinely independent, we say that
    $\Lbrack u_1,\ldots,u_k \Rbrack$ is \emph{non-degenerate}.
\end{definition}

\begin{remark}[\protect{\cite[4.1.11]{Federer1969}}]
    Every oriented $k$-simplex in any Euclidean space is representable as affine
    image of any~\emph{non-degenerate} oriented $k$-simplex in~$\R^{k}$.
\end{remark}

\begin{remark}
    Let $\sigma \subseteq \R^n$ be the convex hull of
    $u_0,\ldots,u_k \in \R^{n}$ and $\tau: \sigma \to \ograss nk$ be the
    orientation of~$\sigma$ defined by
    \begin{displaymath}
        \tau(x) = \frac{(u_1-u_0) \wedge \ldots \wedge (u_k-u_{k-1})}
        {|(u_1-u_0) \wedge \ldots \wedge (u_k-u_{k-1})|}
        \quad \text{for $x \in \sigma$} \,.
    \end{displaymath}
    Then
    \begin{displaymath}
        \Lbrack u_0, \ldots, u_k \Rbrack(\omega)
        = \textint{\sigma}{} \langle \omega(x), \tau(x) 
        \rangle \ud \HM^{k}(x)
        \quad \text{for $\omega \in \Forms{k}(U)$} \,.
    \end{displaymath}
\end{remark}

\begin{definition}[\protect{\cite[4.1.22]{Federer1969}}]
    Let $U$ be an open subset of $\R^{n}$, and $K \subseteq U$ be compact. The
    additive subgroup of~$\Curr{k}(U)$ generated by all oriented
    $k$-simplices $\Lbrack u_0,\ldots,u_k \Rbrack$ such
    that~$\conv \{u_0,\ldots,u_k\} \subseteq K$ is denoted
    \begin{displaymath}
        \IPC{k,K}(U) \,.
    \end{displaymath}
    All the $k$-dimensional \emph{integral polyhedral chains} in~$U$ form the
    abelian group
    \begin{displaymath}
        \IPC{k}(U) = \tbcup \bigl\{ \IPC{k,K}(U) : K \subseteq U \text{ compact} \bigr\} \,.
    \end{displaymath}
\end{definition}

\begin{definition}[\protect{\cite[4.1.22]{Federer1969}}]
    Let $\PC{k, K}(U)$ be the vector-subspace of~$\Curr{k}(U)$ generated
    by $\IPC{k, K}(U)$. The $k$-dimensional \emph{polyhedral chains}
    in~$U$ are members of the vectorspace
    \begin{displaymath}
        \PC{k}(U) = \tbcup \bigl\{ \PC{k, K}(U) : K \subseteq U \text{ compact} \bigr\} \,.
    \end{displaymath}
\end{definition}

\begin{remark}
    Elements of $\PC{k}(U)$ are precisely finite sums of the form
    \begin{displaymath}
        T = \textsum{i=1}{m} a_{i} \Delta_{i}
        \quad \text{where $a_{1}, \ldots, a_m \in \R$
          and $\Delta_{1}, \ldots, \Delta_m$  are oriented $k$-simplices} \,.
    \end{displaymath}
    Furthermore, we may assume $a_i > 0$ for $i \in \{1,2,\ldots,m\}$ since if
    $a_i < 0$ for some $i \in \{1,2,\ldots,m\}$, then we obtain the same $T$ by
    replacing $a_i$ with $-a_i$ and reversing the orientation of
    $\Delta_i$. In~particular, any polyhedral chain has compact support. Now,
    let $\omega \in \Forms{k}(U)$, then
    \begin{displaymath}
        T(\omega) = \textsum{i=1}{m} a_{i} \Delta_{i} (\omega) =
        \textint{\bigcup_{i=1}^{m} \spt \Delta_{i}}{} \langle \omega(x), 
        \textsum{i=1}{m} a_{i} \tau_{i}(x) \rangle \ud \HM^{k}(x) \,;
    \end{displaymath}
    thus, every \emph{polyhedral chain} can be seen as rectifiable $k$-current
    $(E, \tau, \theta)$ corresponding to the countably $k$-rectifiable set
    \begin{displaymath}
        E = \tbcup_{i=1}^{m} \spt \Delta_{i}
    \end{displaymath}
    with orientation given by
    \begin{displaymath}
        \tau(x) = \frac{\tilde{\tau}(x)}{|\tilde{\tau}(x)|}
        \quad \text{for $x \in E$} \,,
        \quad \text{where} \quad
        \tilde{\tau}(x) = \textsum{i=1}{m} a_i \vec{\Delta}_i(x) \CF{\spt \Delta_{i}}(x)
    \end{displaymath}
    and multiplicity function
    \begin{displaymath}
        \theta(x) = |\tilde{\tau}(x)| \,.
    \end{displaymath}
\end{remark}

\subsection*{Gaussian measures}

\begin{definition}[\protect{\cite[Ch.1]{Rosa2023}}]
    Let $A = (E, \tau, \theta)$ be a polyhedral $k$-chain. The \emph{weighted
      Gaussian image} of~$A$ is the measure $\gamma_{A}$ over $\ograss{n}{k}$
    defined by
    \begin{displaymath}
        \gamma_{A} = \tau_{\#}(\theta\HM^{k} \restrict E) \,.
    \end{displaymath}
\end{definition}

\begin{definition}[\protect{\cite[Ch.1]{Rosa2023}}]
    Let $F$ be an integrand. For every polyhedral $k$-chain
    $A=(E, \tau, \theta)$ we define \emph{anisotropic energy functional}
    $\Phi_{F}$ as
    \begin{displaymath}
        \Phi_{F}(A) = \textint{E}{} \theta \cdot F \circ \tau \ud \HM^{k} \,.
    \end{displaymath}
\end{definition}

\begin{remark}
    By the very definition of the push-forward~\cite[2.1.2]{Federer1969} the
    anisotropic energy functional can be written in terms of the weighted
    Gaussian image as follows
    \begin{displaymath}
      \Phi_{F}(A)
      = \textint{E}{} \theta \cdot F \circ \tau \ud\HM^{k} 
      = \textint{\ograss nk}{} F \ud \tau_{\#}( \theta\HM^{k} \restrict E) 
      = \textint{\ograss nk}{} F \ud \gamma_{A} \,.
    \end{displaymath}
\end{remark}

\begin{definition}
    \label{def:mass_of_a_chain}
    The \emph{mass} of $T \in \PC{k}(U)$ is defined as
    \begin{displaymath}
        \Mass(T) = \|T\|(U) \,.
    \end{displaymath}
\end{definition}

\begin{remark}
    In the case when $T = \textsum{i=1}{m} a_{i} \Delta_{i} \in \PC{k}(U)$ for 
    some $a_1, \ldots, a_m \in (0,\infty)$ and oriented $k$-simplices
    $\Delta_1, \ldots, \Delta_m$, we have
    \begin{displaymath}
        \Mass(T)
        = \textsum{i=1}{m} a_{i} \HM^{k}(\Delta_{i})
        = \gamma_T(\ograss nk)
        = \| \gamma_T \|_{\mathrm{TV}}\,.
    \end{displaymath}
\end{remark}

\section{Ellipticity}
\label{section:Ellipticity}

\begin{definition}
    Recalling~\cite[4.1.8 and 4.1.32]{Federer1969} for $k \in \natp$ we set
    \begin{displaymath}
        Q_k = \underbrace{\Lbrack 0, 1 \Rbrack \times \cdots \times \Lbrack 0, 1 \Rbrack}_{\text{$k$ factors}}
        \in \IPC{k}(\R^k) \,.
    \end{displaymath}
\end{definition}

\begin{definition}[\protect{cf.~\cite[4.1]{Rosa2020}}]
    \label{def:polyhedral_test_pairs}
    We~say that $(S, D) \in \mathcal{P}$ is a~\emph{polyhedral test pair} if
    there exists $p \in \orthproj nk$ such that
    \begin{gather*}
        S \in \PC{k}(U) \,,
        \quad
        D = (p^*)_{\#} Q_k \,,
        \quad \text{and} \quad
        \partial S = \partial D  \,.
    \end{gather*}
\end{definition}

\begin{definition}
    \label{def:ellipticity}
    Let $F$ be an integrand and $\mathcal{P}$ be the set of polyhedral test
    pairs.  We say that $F$ is \emph{elliptic with respect to $\mathcal{P}$} if
    there exists $0 < c < \infty$ (the \emph{ellipticity constant}) such that
    \begin{displaymath}
        \Phi_F(S) - \Phi_F(D) > c \bigl( \Mass(S) - \Mass(D) \bigr)
        \quad \text{for $(S,D) \in \mathcal{P}$} \,.
    \end{displaymath}
    If this holds we write $F \in \mathrm{AUE}(\mathcal{P}, c)$.
\end{definition}

\subsection*{Main result}

\begin{miniremark}
    Our main result depends on the theorem proven recently by De~Rosa, Lei, and~Young.
    Before stating it we first discuss some definitions.
\end{miniremark}

\begin{definition}[\protect{cf.~\cite[\S{3}]{Rosa2023}}]
    Let $e_1,\ldots,e_{n}$ be the standard basis of $\R^{n}$ and $P \subseteq \R^n$ be
    a~linear subspace of dimension~$k$. We say that~$P$ has \emph{rational slope} if there
    exists $f \in \Hom(\R^{n},\R^{n-k})$ such that $P = \ker f$ and
    $f(e_i) \in \integers^{n-k}$ for $i \in \{ 1,2,\ldots,n \}$.
\end{definition}

\begin{definition}
    Let $u_1,\ldots,u_{n-k}$ be the standard basis of~$\R^{n-k}$ and
    $\ast : \tbwedge_{n-k} \R^n \to \tbwedge_k \R^n$ be the Hodge star defined with
    respect to the standard orientation of $\R^n$; cf.~\cite[1.7.8]{Federer1969}.
    Set
    \begin{gather}
        \xi_{f} = \ast \bigl( f^*(u_1) \wedge \cdots \wedge f^*(u_{n-k}) \bigr)
        \in \tbwedge_k \R^n
        \quad \text{for $f \in \Hom(\R^n,\R^{n-k})$} 
        \\
        \text{and} \quad
        Q(n,k) = \bigl\{ t \xi_f :
        t \in \R ,\,
        f \in \Hom(\R^n,\R^{n-k}) ,\, f^*(u_i) \in \integers^{n}
        \text{ for } i \in \{ 1,2,\ldots,n-k \}
        \bigr\} \,.
    \end{gather}
\end{definition}

\begin{remark}
    Let $e_1,\ldots,e_{n}$ be the standard basis of~$\R^{n}$ and $u_1,\ldots,u_{n-k}$ be
    the standard basis of~$\R^{n-k}$. If $P \subseteq \R^n$ is a $k$-dimensional linear
    space with rational slope and $f \in \Hom(\R^n,\R^{n-k})$ is such that $P = \ker f$
    and $f(e_i) \in \integers^{n-k}$ for $i \in \{ 1,2,\ldots,n \}$, then
    $\xi_f \in Q(n,k)$.  On the other hand, if $\xi \in Q(n,k)$, then there is
    $f \in \Hom(\R^n,\R^{n-k})$ such that $f(e_i) \in \integers^{n-k}$ for
    $i \in \{ 1,2,\ldots,n \}$ and
    \begin{displaymath}
        \ker f = (\im f^*)^{\perp} = \asssp \xi \,;
    \end{displaymath}
    hence, $\asssp \xi$ has rational slope.
\end{remark}

\begin{lemma}
    \label{lem:SQ_dense}
    There holds
    \begin{displaymath}
        S_{\Q} = \bigl\{ w_1 \wedge \cdots \wedge w_k : w_1,\ldots,w_k \in \Q^k \bigr\}
        \subseteq Q(n,k)
        \subseteq S_{\R} = \tbwedge_k \R^n \cap \{ \eta : \eta \text{ is simple} \} 
    \end{displaymath}
    and $S_{\Q}$ is dense in $S_{\R}$.
\end{lemma}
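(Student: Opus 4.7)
The plan is to prove the three sub-statements separately; the density claim will follow easily from continuity of the wedge product, while the two inclusions reduce to basic facts about the Hodge star and rational linear algebra.

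For the inclusion $S_{\Q} \subseteq Q(n,k)$, I would start with $\eta = w_1 \wedge \cdots \wedge w_k$ where $w_1,\ldots,w_k \in \Q^n$. If $\eta = 0$, take any $f$ together with $t = 0$. Otherwise, $P = \lin\{w_1,\ldots,w_k\}$ is a rational $k$-dimensional subspace of~$\R^n$, so $P^{\perp}$ is again rational and admits a basis $v_1,\ldots,v_{n-k}$ with entries in~$\integers$ (after scaling to clear denominators). Define $f \in \Hom(\R^n,\R^{n-k})$ by placing the $v_i$ as its rows; then $f^*(u_i) = v_i \in \integers^n$. The wedge $f^*(u_1) \wedge \cdots \wedge f^*(u_{n-k})$ is a simple $(n-k)$-vector with associated space $P^{\perp}$, and since the Hodge star sends simple multivectors to simple multivectors while sending associated spaces to their orthogonal complement, $\xi_f$ is simple with $\asssp \xi_f = (P^{\perp})^{\perp} = P = \asssp \eta$. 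Two non-zero simple $k$-vectors with the same associated $k$-plane are scalar multiples of each other, so $\eta = t\xi_f$ for some $t \in \R$, proving $\eta \in Q(n,k)$.

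For the inclusion $Q(n,k) \subseteq S_{\R}$, for any $f \in \Hom(\R^n,\R^{n-k})$ the multivector $f^*(u_1) \wedge \cdots \wedge f^*(u_{n-k})$ is simple (being a wedge of vectors), and the Hodge star preserves simplicity; hence $\xi_f$, and therefore $t\xi_f$, is simple. Density of $S_{\Q}$ in $S_{\R}$ follows from the fact that any $\eta \in S_{\R}$ can be written as $\eta = v_1 \wedge \cdots \wedge v_k$ for some $v_1,\ldots,v_k \in \R^n$; approximating each $v_i$ coordinatewise by rational vectors $w_i^{(m)} \in \Q^n$, the continuity of the wedge product yields $w_1^{(m)} \wedge \cdots \wedge w_k^{(m)} \to \eta$ in $\tbwedge_k \R^n$.

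The main delicate point is the first inclusion, which requires the identity $\asssp(\ast\omega) = (\asssp \omega)^{\perp}$ for simple multivectors, together with the observation that a rational subspace admits an integer basis of its orthogonal complement; both facts are standard, but in a formal write-up deserve an explicit reference to the relevant passages in~\cite[\S{}1.6--1.7]{Federer1969}.
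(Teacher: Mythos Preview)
Your proof is correct. The density argument and the inclusion $Q(n,k) \subseteq S_{\R}$ match the paper (the latter is left implicit there). For the key inclusion $S_{\Q} \subseteq Q(n,k)$, however, you take a slightly different route: the paper extends $w_1,\ldots,w_k$ to a rational basis $w_1,\ldots,w_n$ of~$\R^n$, runs Gram--Schmidt to obtain an orthogonal rational system $y_1,\ldots,y_n$, scales $y_{k+1},\ldots,y_n$ to integers, and then computes $\xi_f$ explicitly via the formula for the Hodge star on an orthogonal basis, even exhibiting the scalar~$t$. You instead observe directly that $P^{\perp}$, as the kernel of a rational matrix, admits an integer basis, and then invoke the abstract identity $\asssp(\ast\omega) = (\asssp\omega)^{\perp}$ together with the fact that two non-zero simple $k$-vectors with the same associated space are proportional. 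Your argument is shorter and more conceptual; the paper's is more constructive and yields the value of~$t$ explicitly, which is not needed here but clarifies where the scalar comes from.
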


\begin{proof}
    Let $e_1,\ldots,e_{n}$ be the standard basis of~$\R^{n}$ and $u_1,\ldots,u_{n-k}$ be
    the standard basis of~$\R^{n-k}$. We first show that $S_{\Q}$ is dense in~$S_{\R}$.
    Suppose $0 < \varepsilon < 1$ and $\eta = v_1 \wedge \cdots \wedge v_k$ for some
    $v_1, \ldots, v_k \in \R^n$. Without loss of generality assume $|v_i| = 1$ for
    $i \in \{1,2,\ldots,k\}$. Since $\Q^n$ is dense in~$\R^n$, there exist
    $w_1, \ldots, w_k \in \Q^n$ such that $|w_i - v_i| \le 2^{-k} \varepsilon$ for
    $i \in \{1,2,\ldots,k\}$. Setting $\zeta = w_1 \wedge \cdots \wedge w_k$ and
    $r_i = w_i - v_i$ for $i \in \{1,2,\ldots,k\}$ we get
    \begin{displaymath}
        |\zeta - \eta|
        = |(v_1+r_1) \wedge \cdots \wedge (v_k + r_k) - v_1 \wedge \cdots \wedge v_k|
        \le \textsum{i=1}{k} {\textstyle \binom{k}{i}} \varepsilon^i
        \le \varepsilon \,.
    \end{displaymath}

    Next, we show that $S_{\Q} \subseteq Q(n,k)$. Assume $w_1, \ldots, w_n \in \Q^n$ and
    $(w_1 \wedge \cdots \wedge w_n) \bullet (e_1 \wedge \cdots \wedge e_n) = s \ne 0$.
    Set $\zeta = w_1 \wedge \cdots \wedge w_k$. We shall find $t \in \R$ and
    $f \in \Hom(\R^n,\R^k)$ satisfying $f(e_i) \in \integers^n$ for
    $i \in \{1,2,\ldots,k\}$ and such that $\zeta = t \xi_f$. Define
    \begin{displaymath}
        y_1 = w_1 \,,
        \quad
        y_j = w_j - \textsum{i=1}{j-1} |y_i|^{-2} (w_j \bullet y_i) y_i
        \quad \text{for $j \in \{2,3,\ldots,n\}$} \,.
    \end{displaymath}
    Note that
    \begin{gather}
        |y_i|^2 \in \Q \,,
        \quad
        y_i \in \Q^n \,,
        \quad
        y_i \perp y_j \,,
        \\
        \quad \text{and} \quad
         y_1 \wedge \cdots \wedge y_i = w_1 \wedge \cdots \wedge w_i
        \quad \text{for $i,j \in \{1,2,\ldots,n\}$ with $i \ne j$} \,.
    \end{gather}
    Clearly there is $M \in \natp$ such that $M y_i \in \integers^n$ for
    $i \in \{1,2,\ldots,n\}$.  Define $f \in \Hom(\R^n,\R^{n-k})$ by~requiring
    \begin{displaymath}
        f^{*}(u_j) = M y_{k+j}
        \quad \text{for $j \in \{1,2,\ldots,n-k\}$} \,.
    \end{displaymath}
    Then, recalling~\cite[1.7.8 and~1.5.2]{Federer1969} and
    $y_1 \wedge \cdots \wedge y_n = s e_1 \wedge \cdots \wedge e_n$, we get
    \begin{gather}
        f(e_i) \in \integers^{n-k} 
        \quad \text{for $i \in \{1,2,\ldots,n\}$} \,,
        \\
        \xi_{f}
        = \ast \bigl( My_{k+1} \wedge \cdots \wedge My_{n}  \bigr)
        = M^{n-k} (-1)^{k(n-k)} \frac{|y_{k+1}| \cdots |y_{n}|}{|y_{1}| \cdots |y_{k}|} y_{1} \wedge \cdots \wedge y_{k}
        = t^{-1} \zeta \,,
    \end{gather}
    where $t = M^{k-n} (-1)^{k(n-k)} |y_{1}| \cdots |y_{k}| \bigl( |y_{k+1}| \cdots |y_{n}| \bigr)^{-1} \ne 0$.
\end{proof}

\begin{theorem}[\protect{cf.~\cite[3.2]{Rosa2023}}]
    \label{thm:antonio}
    Assume
    \begin{gather}
        \eta_{0} \in \ograss nk \,,
        \quad
        \eta_{1}, \ldots, \eta_{d} \in Q(n,k) \cap \ograss nk \,,
        \quad
        m_{1},\ldots,m_{d} \in (0,\infty) \,,
        \\
        \textsum{i=1}{d} m_{i}\eta_{i}=\eta_{0} \,,
        \quad
        E \in \ograss kk \text{ corresponds to the standard orientation of~$\R^k$} \,,
        \\
        p \in \orthproj nk \text{ is such that } \tbwedge_k p^*(E) = \eta_0 \,,
        \quad
        D = (p^*)_{\#}Q_k \,,
        \quad \text{and} \quad
        \mu = \textsum{i=1}{d} m_{i} \Dirac{\eta_{i}} \,.
    \end{gather}
    Then for any $\epsilon > 0$, there exists $A \in
    \PC{k}(\R^{n})$ with coefficients in~$\Q$ such that
    \begin{displaymath}
        \partial A = \partial D
        \quad \text{and} \quad
        \|\gamma_{A} - \mu\|_{\mathrm{TV}} < \epsilon \,.
    \end{displaymath}
\end{theorem}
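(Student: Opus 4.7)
The plan is to obtain $A$ by a direct appeal to the constructive machinery of \cite[Theorem~3.2]{Rosa2023}; in the present notation the statement is essentially their theorem, so I shall outline how the construction proceeds. First I would note that the balance condition $\sum_{i=1}^{d} m_{i}\eta_{i} = \eta_{0}$ is not merely compatible with $\partial A = \partial D$ but necessary for it: by Stokes' theorem any $A \in \PC{k}(\R^{n})$ with $\partial A = \partial D$ has the same total $k$-vector mass as $D$, namely $\eta_{0} \cdot \Mass(D)$. Hence the hypotheses are exactly the obstruction-free case.

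The construction itself goes as follows. Since $\eta_{i} \in Q(n,k) \cap \ograss{n}{k}$, each associated plane $\asssp \eta_{i}$ has rational slope, so by the reasoning behind Lemma~\ref{lem:SQ_dense} it admits a periodic lattice of vertices in $\Q^{n}$. Fix a small scale $\delta > 0$ and decompose $D$ into small parallelepipeds of diameter $\delta$. Over each such parallelepiped erect a thin multi-layered polyhedral chain with $d$ sheets, the $i$-th sheet being an oriented piece of $\asssp \eta_{i}$ of relative $\HM^{k}$-measure $m_{i}$ and glued to its neighbours by transition regions whose contribution to the Gaussian image is negligible as $\delta \to 0^{+}$. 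Approximating each $m_{i}$ by a rational $\tilde{m}_{i}$ with $\sum_{i} |m_{i} - \tilde{m}_{i}| < \epsilon / (2 \Mass(D))$ yields coefficients in $\Q$ and places the Gaussian image within $\epsilon/2$ of $\mu$ in total variation once $\delta$ is small enough.

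Boundary matching is arranged in a thin $\delta$-collar of $\partial D$, where the interior layers are capped back to $\partial D$. Since the caps are built from the same rational tilings as the interior, they introduce only $\Q$-coefficients and an $O(\delta)$ contribution to the mass, which can be absorbed into the remaining $\epsilon/2$ budget. The main obstacle, therefore, will be executing this capping with sufficient care to preserve simultaneously the exact equality $\partial A = \partial D$, the genuine rationality of all coefficients, and the total variation bound. This is the combinatorially most delicate part of \cite[3.2]{Rosa2023}; in the present setting it amounts to verifying that the capping region can be chosen commensurably with the rational lattices determined by the directions $\eta_{1}, \ldots, \eta_{d}$, which is possible because finitely many rational lattices in $\R^{n}$ admit a common refinement.
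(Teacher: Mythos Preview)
The paper does not prove this theorem at all: it is stated with the attribution ``cf.~\cite[3.2]{Rosa2023}'' and used as a black box in the proof of Theorem~\ref{thm:main}. Your proposal is therefore aligned with the paper's treatment in that you correctly identify the statement as a restatement of \cite[Theorem~3.2]{Rosa2023} and defer to that reference for the actual construction; the informal sketch you add (layered sheets over a fine subdivision of~$D$, rational approximation of the weights, boundary capping in a thin collar) is a reasonable summary of the machinery in~\cite{Rosa2023} but goes beyond anything the present paper attempts.
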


\begin{theorem}
    \label{thm:main}
    Assume $F$ is a Lipschitzian integrand and $0 < c_{2} < c_{1} < \infty$. If
    $F \in \mathrm{AUE}(\mathcal{P}, c_{1})$, then $F \in \mathrm{UPC(c_{2})}$.
\end{theorem}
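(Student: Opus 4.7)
The plan is to reduce uniform polyconvexity to Almgren uniform ellipticity by means of the constructive Theorem~\ref{thm:antonio}. Given UPC data $\eta_0 = \textsum{i=1}{d} m_i \eta_i$ with $\eta_i \in \ograss nk$ and $m_i > 0$, I~would build a~polyhedral test pair $(A, D) \in \mathcal{P}$ in~which $D = (p^*)_\# Q_k$ for some $p \in \orthproj nk$ satisfying $\tbwedge_k p^*(E) = \eta_0$, and~$\gamma_A$ is close in~total variation to $\mu := \textsum{i=1}{d} m_i \Dirac{\eta_i}$. Applying AUE to~$(A, D)$, using $\Phi_F(D) = F(\eta_0)$ and $\Mass(D) = 1$, and passing to the limit should yield the target inequality.

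The main obstacle is that Theorem~\ref{thm:antonio} requires $\eta_1, \ldots, \eta_d \in Q(n,k) \cap \ograss nk$ and the sum identity to~hold \emph{exactly}, whereas in UPC the~$\eta_i$ are arbitrary simple unit vectors. To~bridge this gap I~would pick, via Lemma~\ref{lem:SQ_dense} and normalisation, $\tilde\eta_i \in Q(n,k) \cap \ograss nk$ with $|\tilde\eta_i - \eta_i| < \delta'$ for~$i \in \{1,\ldots,d\}$; the~residual $\delta := \eta_0 - \textsum{i=1}{d} m_i \tilde\eta_i = \textsum{i=1}{d} m_i(\eta_i - \tilde\eta_i)$ then satisfies $|\delta| \le \delta' \textsum{i=1}{d} m_i$. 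Expanding $\delta$ in~the~coordinate basis $(u_\lambda)_{\lambda \in \Lambda(n,k)}$ and noting that every~$\pm u_\lambda$ belongs to~$Q(n,k) \cap \ograss nk$, I~would write $\delta = \textsum{j=d+1}{d+r} m_j \tilde\eta_j$ for some~$r \in \nat$ with rational-slope summands $\tilde\eta_j \in Q(n,k) \cap \ograss nk$ and $\textsum{j=d+1}{d+r} m_j \le C_{n,k}|\delta|$, the~constant~$C_{n,k}$ coming from the~equivalence of~$\ell^1$ and Euclidean norms on the finite-dimensional space~$\tbwedge_k \R^n$. The resulting identity $\eta_0 = \textsum{i=1}{d+r} m_i \tilde\eta_i$ matches the~hypotheses of Theorem~\ref{thm:antonio}, which for any~$\epsilon > 0$ produces $A \in \PC{k}(\R^n)$ with $\partial A = \partial D$ and $\|\gamma_A - \tilde\mu\|_{\mathrm{TV}} < \epsilon$, where $\tilde\mu := \textsum{i=1}{d+r} m_i \Dirac{\tilde\eta_i}$.

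The AUE inequality applied to~$(A, D)$ reads $\int F \ud\gamma_A - F(\eta_0) > c_1\bigl(\|\gamma_A\|_{\mathrm{TV}} - 1\bigr)$. Since $\Lip F < \infty$ and $\ograss nk$ is compact, $F$~is~bounded, so the errors $|\int F \ud\gamma_A - \int F \ud\tilde\mu| \le (\sup F)\epsilon$ and $\bigl|\|\gamma_A\|_{\mathrm{TV}} - \|\tilde\mu\|_{\mathrm{TV}}\bigr| \le \epsilon$ disappear as~$\epsilon \to 0$, giving $\textsum{i=1}{d+r} m_i F(\tilde\eta_i) - F(\eta_0) \ge c_1\bigl(\textsum{i=1}{d+r} m_i - 1\bigr)$. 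Replacing $\tilde\eta_i$ by~$\eta_i$ through the~Lipschitz estimate $|F(\tilde\eta_i) - F(\eta_i)| \le (\Lip F)\delta'$ for~$i \le d$, and absorbing the~auxiliary contributions $\textsum{j=d+1}{d+r} m_j F(\tilde\eta_j)$ and $\textsum{j=d+1}{d+r} m_j$ (both bounded by a~multiple of~$\delta'\textsum{i=1}{d} m_i$), I~would arrive at $\textsum{i=1}{d} m_i F(\eta_i) - F(\eta_0) \ge c_1\bigl(\textsum{i=1}{d} m_i - 1\bigr) - K\delta'$, with~$K$ depending only on~$F$, $n$, $k$, and~$\textsum{i=1}{d} m_i$. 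Letting $\delta' \to 0$ yields UPC($c_1$), hence a~fortiori UPC($c_2$); the gap $c_2 < c_1$ is~the comfortable buffer that absorbs the~residual approximation error when one prefers not to take the last limit.
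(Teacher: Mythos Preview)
Your proposal is correct and follows essentially the same strategy as the paper: approximate the~$\eta_i$ by rational-slope unit simple vectors using Lemma~\ref{lem:SQ_dense}, absorb the residual by expanding it in the coordinate basis $(u_\lambda)_{\lambda \in \Lambda(n,k)}$ (each $\pm u_\lambda$ lying in $Q(n,k)\cap\ograss nk$), invoke Theorem~\ref{thm:antonio} to manufacture a polyhedral test pair, and control all errors with the Lipschitz bound on~$F$ together with~$\sup F$.

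The differences are organisational rather than conceptual. The paper argues by contradiction with a single carefully tuned~$\epsilon$, and it also perturbs~$\eta_0$ to~$\tilde\eta_0 \in Q(n,k)$, which (as you implicitly notice) is unnecessary since Theorem~\ref{thm:antonio} only requires $\eta_0 \in \ograss nk$. You instead argue directly and pass to two successive limits $\epsilon \to 0$, then $\delta' \to 0$. This cleaner bookkeeping actually buys you something: your final inequality is $\textsum{i=1}{d} m_i F(\eta_i) - F(\eta_0) \ge c_1\bigl(\textsum{i=1}{d} m_i - 1\bigr)$, i.e.\ $\mathrm{UPC}(c_1)$ rather than merely $\mathrm{UPC}(c_2)$ for $c_2 < c_1$. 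The slack $c_1 - c_2$ in the paper's statement is thus an artefact of its one-shot contradiction estimate, not an intrinsic loss of the method.
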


\begin{proof}
    We argue by contradiction. Assume $F \in \mathrm{AUE}(\mathcal{P}, c_{1})$ is
    Lipschitzian with $\Lip F = L < \infty$ and $F \notin \mathrm{UPC(c_{2})}$. Use the
    Kirszbraun theorem~\cite[2.10.43]{Federer1969} to extend~$F$ to the whole
    of~$\tbwedge_k \R^n$ preserving the Lipschitz constant. We shall use the same
    symbol~$F$ to denote this extension. Since $F \notin \mathrm{UPC(c_{2})}$, there exist
    $d \in \natp$, $\eta_{0},\ldots,\eta_{d} \in \ograss{k}{n}$, and
    $m_1, \ldots, m_d \in (0,\infty)$ such that
    \begin{equation}
        \label{eq:F_not_UPC}
        \textsum{i=1}{d}m_{i}\eta_{i} = \eta_{0}
        \quad \text{and} \quad
        \textsum{i=1}{d} m_{i} F(\eta_{i}) - F(\eta_{0})
        < c_{2} \bigl( \textsum{i=1}{d} m_{i} |\eta_{i}| - |\eta_{0}| \bigr) \,.
    \end{equation}
    Since $|\eta_i| = 1$ for $i \in \{1,2,\ldots,d\}$,
    $\bigl| \sum_{i=1}^{d} m_i \eta_i \bigr| = 1$, and the Euclidean norm on
    $\tbwedge_k \R^n$ is strictly convex, we see that $d \ge 3$ and
    $\sum_{i=1}^{d} m_i > 1$. Let
    \begin{displaymath}
        M = \sup \im F
        \quad \text{and} \quad
        \epsilon = \frac{\frac 12 (c_{1}-c_{2})\big(\sum_{i=1}^{d}m_{i} - 1\big)}
        {\frac 32 L + M + M \binom nk^{\frac 12} + c_{1}} \,.
    \end{displaymath}
    For each $i \in \{0, 1,\ldots,d\}$ we choose, using~\ref{lem:SQ_dense}, a~unit simple
    $k$-vector $\tilde{\eta}_{i} \in Q(n,k) \cap \ograss nk$ satisfying
    \begin{equation}
        \label{eq:choice_of_eta}
        |\eta_{i} - \tilde{\eta_{i}}| < \frac{\epsilon}{2 d m_{i}}
        \quad \text{if $i \ge 1$}
        \quad \text{and} \quad
        |\eta_{0} - \tilde{\eta_{0}}| < \frac{\epsilon}{2} \,;
    \end{equation}
    next, we set
    \begin{displaymath}
        \zeta = \tilde{\eta_{0}} - \textsum{i=1}{d} m_{i} \tilde{\eta_{i}} \,.
    \end{displaymath}
    Since $|\eta_i| = 1 = |\tilde{\eta}_i|$ for $i \in \{ 0,1,\ldots,d \}$ we~obtain
    \begin{align}
      \label{eq:estiamte_for_zeta}
      |\zeta| = 
      \bigl| \tilde{\eta_{0}} - \textsum{i=1}{d} m_{i}\tilde{\eta_{i}} \bigr|
      &\leq \bigl| \tilde{\eta_{0}} - \eta_{0} \bigr|
        +\bigl| \eta_{0} - \textsum{i=1}{d} m_{i}\tilde{\eta_{i}} \bigr|
      \\
      &= \bigl| \tilde{\eta_{0}} - \eta_{0} \bigr|
        + \bigl| \textsum{i=1}{d}m_{i}\eta_{i}
        - \textsum{i=1}{d} m_{i}\tilde{\eta_{i}} \bigr|
      \\
      &\leq \bigl| \tilde{\eta_{0}} - \eta_{0} \bigr|
        + \textsum{i=1}{d} m_{i} \bigl| \eta_{i} - \tilde{\eta_{i}} \bigr|
      \\
      &< \frac{\epsilon}{2}+ \frac{\epsilon}{2} = \epsilon \,.
    \end{align}
    Let $e_1, \ldots, e_n$ be the standard basis of~$\Q^n \subseteq \R^n$ and
    $e_{\lambda} = e_{\lambda(1)} \wedge \cdots \wedge e_{\lambda(k)}$ for
    $\lambda \in \Lambda(n,k)$. Recalling~\cite[1.7.5]{Federer1969} we see that
    $\{ e_{\lambda} : \lambda \in \Lambda(n,k) \}$ is an orthonormal basis
    of~$\tbwedge_k \R^n$. Set
    \begin{gather}
        m_{\lambda} = |\zeta \bullet e_{\lambda}| 
        \quad \text{and} \quad
        \eta_{\lambda} = \sgn( \zeta \bullet e_{\lambda} ) e_{\lambda}
        \quad \text{for $\lambda \in \Lambda(n,k)$} \,,
        \\
        L = \Lambda(n,k) \cap \{ \lambda : m_{\lambda} > 0 \} \,.
    \end{gather}
    We get
    \begin{displaymath}
        \tilde{\eta_{0}} = \textsum{i=1}{d} m_{i} \tilde{\eta_{i}}
        + \textsum{\lambda \in L}{} m_{\lambda} \eta_{\lambda} \,.
    \end{displaymath}
    Denote
    \begin{displaymath}
        \mu = \textsum{i=1}{d}m_{i} \Dirac{\eta_{i}}
        \quad \text{and} \quad
        \tilde{\mu} = \textsum{i=1}{d} m_{i}\Dirac{\tilde{\eta}_{i}}
        + \textsum{\lambda \in L}{} m_{\lambda}\Dirac{\eta_{\lambda}} \,.
    \end{displaymath}
    Clearly $\eta_{\lambda} \in Q(n,k) \cap \ograss nk$ and $m_{\lambda} > 0$ for
    $\lambda \in L$. Therefore, the set of unit simple $k$-vectors $\{ \tilde{\eta}_{0}$,
    $\tilde{\eta}_{1}$, \ldots,
    $\tilde{\eta}_{d} \} \cup \{ \eta_{\lambda} : \lambda \in L \}$, the numbers
    $\{ m_1, \ldots, m_d \} \cup \{ m_{\lambda} : \lambda \in L \}$, and the measure
    $\tilde{\mu}$ satisfy the assumptions of theorem~\ref{thm:antonio}. Thus, there exist
    $k$-polyhedral chains $A, \tilde{D} \in \PC{k}(\R^{n})$ such that
    \begin{displaymath}
        \vec{D} = \tilde{\eta}_0 \,,
        \quad
        \spt D \text{ is a unit cube in $\asssp \tilde{\eta}_0$} \,,
        \quad
        \partial A = \partial \tilde{D} \,,
        \quad \text{and} \quad
        \|\gamma_{A} - \tilde{\mu}\|_{\mathrm{TV}} < \epsilon \,.
    \end{displaymath}
    By~the assumption that $F \in \mathrm{AUE}(\mathcal{P}, c_{1})$ we have the
    following
    \begin{align}
      \Phi(A)-\Phi(\tilde{D})
      &= \textint{\ograss{n}{k}}{} F \ud \gamma_{A} -
        \textint{\ograss{n}{k}}{} F \ud \Dirac{\tilde{\eta}_{0}} \\
      &= \textint{\ograss{n}{k}}{} F \ud (\gamma_{A}-\tilde{\mu}) \label{est:1} \\
      &\phantom{= } + \textint{\ograss{n}{k}}{} F \ud(\tilde{\mu}-\mu) \label{est:2} \\
      &\phantom{= } + \textint{\ograss{n}{k}}{} F \ud \mu - F(\tilde{\eta}_{0}) \,. \label{est:3}
    \end{align}
    We will show separately upper bounds for \eqref{est:1}, \eqref{est:2} and
    \eqref{est:3}.

    \smallskip
    
    \noindent
    \fbox{Estimate of (\ref{est:1})} Obviously
    \begin{align}
      \textint{\ograss{n}{k}}{} F \ud (\gamma_{A}-\tilde{\mu})
      \leq M \|\gamma_{A}-\tilde{\mu}\|_{\mathrm{TV}}
      < M \epsilon \,.
    \end{align}

    \smallskip
    
    \noindent
    \fbox{Estimate of (\ref{est:2})} Recalling~\eqref{eq:choice_of_eta}
    and~\eqref{eq:estiamte_for_zeta} we get
    \begin{align}
      \textint{\ograss{n}{k}}{} F \ud (\tilde{\mu}-\mu)
      &= \textint{\ograss{n}{k}}{} F \ud \tilde{\mu}
        - \textint{\ograss{n}{k}}{} F \ud {\mu}
      \\
      &= \textsum{i=1}{d}m_{i}F(\tilde{\eta}_{i}) +
        \textsum{\lambda \in L}{}m_{\lambda}F(\eta_{\lambda}) -
        \textsum{i=1}{d}m_{i}F(\eta_{i})
      \\
      &= \textsum{i=1}{d} \bigl( m_{i}F(\tilde{\eta}_{i})-m_{i}F(\eta_{i}) \bigr) +
        \textsum{\lambda \in L}{}m_{\lambda}F(\eta_{\lambda})
      \\
      &\le L \textsum{i=1}{d} m_{i} |\eta_{i} - \tilde{\eta}_i| +
        M \textsum{\lambda \in L}{} m_{\lambda}
      \\
      &\leq L\textsum{i=1}{d}|m_{i}||\tilde{\eta_{i}} - \eta_{i}| +
        M \bigl( \textsum{\lambda \in L}{}m_{\lambda}^{2} \bigr)^{\frac{1}{2}}(\card L)^{\frac{1}{2}}
      \\
      &< \tfrac 12 L \epsilon +
        M \bigl( \textsum{\lambda \in L}{} |m_{\lambda} \eta_{\lambda} |^{2} \bigr)^{\frac{1}{2}}
        {\textstyle \binom nk}^{\frac{1}{2}}
      \\
      &= \tfrac 12 L \epsilon + M {\textstyle \binom nk}^{\frac{1}{2}} |\zeta|
        < \bigl( \tfrac 12 L + M {\textstyle \binom nk}^{\frac{1}{2}} \bigr) \epsilon \,,
    \end{align}
    where the next to last inequality is a consequence of
    $\{ \eta_{\lambda} : \lambda \in \Lambda(n,k) \}$ being an orthonormal basis and the
    Pythagorean Theorem.

    \smallskip
    
    \noindent
    \fbox{Estimate of (\ref{est:3})} Recalling~\eqref{eq:F_not_UPC} we get
    \begin{align}
      \textint{\ograss{n}{k}}{} F \ud \mu - F(\tilde{\eta}_{0})
      &= \textsum{i=1}{d} m_{i} F(\eta_{i}) - F(\eta_{0}) + (F(\eta_{0}) - F(\tilde{\eta_{0}})) \\
      &< c_{2} \bigl( \textsum{i=1}{d}m_{i}|\eta_{i}|- |\eta_{0}| \bigr) + L |\eta_{0} - \tilde{\eta_{0}}| \\
      &< c_{2} \bigl( \textsum{i=1}{d}m_{i}- 1 \bigr) + L\epsilon \,.
    \end{align}
    
    Now, combining all the estimates~\eqref{est:1}, \eqref{est:2}, and~\eqref{est:3} gives
    \begin{equation}
        \label{est:upper}
        \Phi(A) - \Phi(\tilde{D}) <
        c_{2}\big( \textsum{i=1}{d}m_{i}- 1 \big) +
        \epsilon \big( \tfrac 32 L + M + M {\textstyle \binom nk}^{\frac{1}{2}} \big)  \,.
    \end{equation}
    On the other hand, by~\cite[2.3]{Rosa2023} and recalling that $\spt D$ is a unit cube,
    we have the following lower estimate
    \begin{equation}
        \label{eq:Mass_and_TV}
        c_{1} \bigl( \Mass(A) - \Mass(\tilde{D}) \bigr)
        \ge c_{1} \bigl( \|\gamma_{A}\|_{\mathrm{TV}} - 1 \bigr) \,.
    \end{equation}
    Moreover, we observe
    \begin{displaymath}
        \textsum{i=1}{d} m_{i}
        \leq \textsum{i=1}{d} m_{i} + \textsum{\lambda \in L}{} m_{\lambda} 
        = \|\tilde{\mu}\|_{\mathrm{TV}}
        \le \|\tilde{\mu} - \gamma_{A}\|_{\mathrm{TV}} +
        \|\gamma_{A}\|_{\mathrm{TV}} 
        < \epsilon + \|\gamma_{A}\|_{\mathrm{TV}} \,;
    \end{displaymath}
    thus,
    \begin{equation}
        \label{est:lower}
        c_{1}\big( \|\gamma_{A}\|_{\mathrm{TV}} - 1 \big) =
        c_{1}\big( \|\gamma_{A}\|_{\mathrm{TV}} + \epsilon - \epsilon - 1 \big)
        > c_{1} \bigl( \textsum{i=1}{d}m_{i} - 1 \bigr) - c_{1}\epsilon \,.
    \end{equation}
    Finally, since $(A,\tilde{D})$ is a polyhedral test pair and
    $F \in \mathrm{AUE}(\mathcal{P}, c_{1})$, combining~\eqref{est:upper},
    \eqref{eq:Mass_and_TV}, and~\eqref{est:lower} we~obtain
    \begin{multline}
        c_{2}\bigl( \textsum{i=1}{d}m_{i}- 1 \bigr) + 
        \epsilon \bigl( \tfrac 32 L + M + M{\textstyle \binom nk}^{\frac{1}{2}} \bigr) \\
        > \Phi(A) - \Phi(\tilde{D})
        \ge c_{1} \bigl( \Mass(A) - \Mass(\tilde{D}) \bigr) 
        > c_{1} \bigl( \textsum{i=1}{d}m_{i} - 1 \bigr) - c_{1}\epsilon \,.
    \end{multline}
    Recalling the definition of $\epsilon$ we see that this cannot happen.
\end{proof}

\begin{remark}
    \label{rem:C1_integrands}
    Note that if $F$ is of class~$\cnt{1}$, then standard tools
    (see~\cite[3.1.14]{Federer1969}) allow to extend $F$ to a~$\cnt{1}$ function on some
    compact neighbourhood of~$\ograss nk$ and then it is clear that $F$ is Lipschitzian so
    our theorem applies to any integrand of class~$\cnt{1}$.
\end{remark}

\subsection*{Acknowledgements}
This research has been supported by \href{https://ncn.gov.pl/}{National Science Centre
  Poland} grant number 2022/46/E/ST1/00328. The author would like to extend sincere
gratitude to Sławomir Kolasiński for his invaluable assistance and insightful discussions
throughout this research.

\subsection*{Data Availability Statement}
Data sharing is not applicable to this article as no datasets were generated or analysed during the research.

\bigskip

{\small
\bibliographystyle{halpha}
\addcontentsline{toc}{section}{\numberline{}References}
\bibliography{auepolyconvex.bib}{}
}

\bigskip

{
\small \noindent
Maciej Lesniak
\\
\texttt{maciej.lesniak0@gmail.com}
}

\end{document}